\numberwithin{equation}{section}
\begin{document}

\title
{A property of the interleaving distance for sheaves}

\author{Fran{\c c}ois Petit, Pierre Schapira and Lukas Waas}
\maketitle
  
\begin{abstract}
Let  $X$ be a real analytic manifold endowed with a  distance satisfying suitable properties and let $\cor$ be a field. In~\cite{PS20}, the authors construct a pseudo-distance  on the derived category of sheaves of $\cor$-modules on $X$, generalizing a previous construction of~\cite{KS18}. 
We prove here that if the distance between two constructible sheaves with compact support (or more generally, constructible sheaves up to infinity) on $X$ is zero, then these two sheaves are isomorphic.
This answers in particular a question of~\cite{KS18}.
\end{abstract}

{\renewcommand{\thefootnote}{\mbox{}}
	\footnote{Key words: sheaves, interleaving distance, persistent homology}
	\footnote{The research of F.P.  was supported by the IdEx Université de Paris, ANR-18-IDEX-0001}
	\footnote{The research of L.W. was supported by the Landesgraduiertenförderung Baden-Württemberg.}
	\addtocounter{footnote}{-4}
}

\section{Introduction}

The interleaving distance was introduced in \cite{FCMGO09} and provides a pseudo-metric on the category of persistent modules. It was generalized to multi-persistence modules by M. Lesnick in \cites{Les12, Les15}. In his thesis \cite{Cur14}, J. Curry  showed how to interpret the notion of persistent modules in the classical language of sheaves. This allows one to interpret the interleaving pseudo-distance as a pseudo-metric on the category of $\gamma$-sheaves on a finite-dimensional real vector space where $\gamma$ is a convex proper cone. In \cite{KS18}, M. Kashiwara and P. Schapira systematically treat persistent homology in the framework of \textit{derived} sheaf theory. In particular, they introduce the so-called convolution pseudo-distance for derived sheaves on real normed vector spaces. There, they asked if this pseudo-distance is a distance \cite{KS18}*{Rem.~2.3}, that is if two sheaves with convolution distance zero are necessarily isomorphic. A similar question had been studied by Lesnick in \cite{Les15} where he proved that the interleaving pseudo-distance restricted to finitely presented persistent modules is a distance. This result is related to the aforementioned question as it follows from \cite{BP21} that the restriction of the convolution pseudo-distance to the subcategory of $\gamma$-sheaves is equal to the interleaving pseudo-distance. In \cite{BG18}, N. Berkouk and G. Ginot proved that the convolution pseudo-distance is in general not a distance and established, by constructing an appropriate matching distance, that it is a distance on constructible sheaves on $\R$.
Here, following \cite{PS20}, we consider a generalization of the convolution pseudo-distance for sheaves on ``good'' metric spaces (as for instance complete Riemannian manifolds with strictly positive convexity radius). We prove that on a real analytic manifold endowed with a ``good'' distance, the pseudo-distance on sheaves is a distance when restricted to constructible sheaves with compact support (or more generally, constructible sheaves up to infinity).

\section{Review}
Throughout this paper, $\cor$ is a field. We shall mainly follow the notations of~\cite{KS90} for sheaf theory.
For a topological space $X$ we denote by $\Delta$ the diagonal of $X\times X$  and by $a_X$ the map $X\to\rmpt$.
 We denote by $\Derb(\cor_X)$ the bounded derived category of sheaves of $\cor$-modules. If $X$ is a real analytic manifold, we denote by  $\Derb_\Rc(\cor_X)$ the full triangulated subcategory of  $\Derb(\cor_X)$ consisting of $\R$-constructible sheaves. We denote by $\omega_X$ the dualizing complex.

\subsection{Kernels}
Recall that a topological  space $X$ is good if it is Hausdorff, locally compact, countable at infinity and of finite flabby dimension.

Given topological spaces $X_i$ ($i=1,2,3$) we set
$X_{ij}=X_i\times X_j$, $X_{123}=X_1\times X_2\times X_3$.    We denote by $q_i\cl X_{ij}\to X_i$
and $q_{ij}\cl X_{123}\to X_{ij}$ the projections.

For $A_{ij}\subset X_{ij}$ with $i=1,2$, $j=i+1$, one defines $A_{12}\conv A_{23}\subset X_{13}$ as
\eq\label{eq:convker0}
&&A_{12}\cconv[2] A_{23}\eqdot {q_{13}}(\opb{q_{12}}A_{12}\cap\opb{q_{23}}A_{23}).
\eneq
For good topological spaces $X_i$'s, one often calls an object $K_{ij}\in\Derb(\cor_{X_{ij}})$ {\em a kernel}.
One defines  as usual the  composition of kernels by
\eq\label{eq:convker}
&&K_{12}\cconv[2] K_{23}\eqdot \reim{q_{13}}(\opb{q_{12}}K_{12}\ltens\opb{q_{23}}K_{23}).
\eneq
If there is no risk of confusion, we write $\conv$ instead of $\cconv[2]$. 

\subsection{Distances}

For a metric space $(X,d)$, $x_0\in X$ and $a\in\R_{\geq0}$, we set
\begin{equation*}
	\begin{array}{ll}
		B_a(x_0)=\{x\in X;d(x_0,x)\leq a\}, & B_a^\circ(x_0)=\{x\in X;d(x_0,x) < a\}\\
		\Delta_a=\{(x,y)\in X\times X; d(x,y)\leq a\}, & \Delta^\circ_a=\{(x,y)\in X\times X; d(x,y)< a\},\\
		\Delta^+=\{(x,y,t)\in  X\times X\times\R;d(x,y)\leq t\}, & \Delta^{+, \circ}=\{(x,y,t)\in  X\times X\times\R;d(x,y) < t \}.
	\end{array}
\end{equation*}

Following~\cite{PS20}, we say that a metric space $(X,d)$ is good, or simply that the distance is good,  if the underlying topological space is good and moreover 
\eq\label{hyp:dist1}
&&\left\{\parbox{70ex}{
 there exists some  $\alpha_X>0$
such that for all $0\leq a,b$ with $a+b\leq\alpha_X$:\\
(i) for any $x_1,x_2\in X$, $B_a(x_1)\cap B_b(x_2)$ is contractible or empty (in particular, for any $x\in X$, $B_a(x)$ is contractible),\\
(ii) the two projections $q_1$ and $q_2$ are proper on $\Delta_a$,\\
(iii) $\Delta_a\conv\Delta_b=\Delta_{a+b}$.
}\right.
\eneq
In order to construct a distance on sheaves in this situation, the main idea of~\cite{PS20} is to use the kernel $\cor_{\Delta_a}$ when
$0\leq a<\alpha_X$ and to replace it by a composition of kernels $\cor_{\Delta_b}$ with  $0\leq b<\alpha_X$ otherwise.

We shall also consider the hypotheses~\eqref{hyp:dist2} below which insure that  the kernels $\cor_{\Delta_a}$ are invertible (see~\cite{PS20}*{Prop.~2.2.3}). 

Let $U$ be an open  subset of a real $C^0$-manifold $M$. 
We  say that  $U$  is locally topologically convex  (l.t.c. for short)  in $M$ if each $x\in M$ admits an open neighborhood $W$ such that there exists a topological isomorphism $\phi\cl W\isoto V$, with $V$ open in a real vector space, such that $\phi(W\cap U)$ 
is convex.
Clearly, if $U$ is  l.t.c. then it is l.c.t.

\eq\label{hyp:dist2}
&&\left\{\parbox{65ex}{
	The good metric  space  $X$ is a $C^0$-manifold and 
	\banum
	\item
	for   $0<a\leq\alpha_X$, the set $\Delta_a^\circ$ is l.t.c. in $X\times X$,
	\item
	the set $\Delta^{+,\circ}$ is l.t.c.  in $X\times X\times]-\infty,\alpha_X[$.
	\item
         For  $x,y\in X$, setting $Z_{a}(x,y)=B_a(x)\cap B^\circ_a(y)$, one has $\rsect(X;\cor_{Z_{a}(x,y)})\simeq0$ for $x\neq y$ and  $0<a\leq\alpha_X$. 
        \eanum
}\right.
\eneq

%
 
In loc.\ cit.\ it is shown that complete Riemannian manifolds with strictly positive convexity radius as well as normed vector spaces satisfy conditions \eqref{hyp:dist1} and  \eqref{hyp:dist2}. Moreover, given a good metric space $(X,d)$, one can naturally associate a pseudo-distance on the objects of the derived category $\Derb(\cor_X)$, generalizing the convolution distance first introduced in~\cite{KS18}.

\subsection{Constructible sheaves}
Here,  we work in the framework of real
analytic manifolds and use the notions of being subanalytic or constructible ``up to infinity'', following~\cite{Sc20}.

 Recall that  a b-analytic manifold $\fX=(X,\bX)$ is the data of  real analytic manifold $\bX$ and a subanalytic open relatively compact subset $X\subset \bX$. One denotes by $j_X\cl X\into \bX$ the embedding. A morphism of  b-analytic manifolds is a morphism of real analytic  manifolds $f\cl X\to Y$ whose graph is subanalytic in $\bX\times\bY$. 

One defines naturally  the full triangulated subcategory $\Derb_\Rc(\cor_{\fX})$ of $\Derb_\Rc(\cor_X)$  consisting of sheaves constructible up to infinity. These are the objects $F$ of $\Derb_\Rc(\cor_{X})$ such that $\eim{j_X}F\in\Derb_\Rc(\cor_{\bX})$. 
The advantage of the notion of being constructible up to infinity is that it is stable under the six operations, in particular, by direct images
of morphisms of b-analytic manifolds \cite{Sc20}. 

In the sequel, instead of writing ``subanalytic up to infinity '' or ``constructible  up to infinity'', we shall  write 
``b-subanalytic'' or ``b-constructible''.

Let $F\in\Derb_\Rc(\cor_X)$. 
Recall that  $F$ is b-constructible  if $\eim{j_X}F\in\Derb_\Rc(\cor_{\bX})$. One denotes by 
$\Derb_\Rc(\cor_{\fX})$ the full triangulated subcategory of $\Derb_\Rc(\cor_X)$  consisting of sheaves b-constructible.

\section{Main theorem}
In this section, $\fX$ is a b-analytic manifold endowed with a good distance.

Recall the definition of being $a$-isomorphic and the associated pseudo-distance $\dist$, following~\cite{KS18} generalized in~\cite{PS20}.

\begin{definition}\label{def:1}
Let $F,G\in\Derb(\cor_X)$ and let $a>0$. One says that $F$ and $G$ are $a$-isomorphic if there exist 
morphisms $u_a\cl F\conv  \cor_{\Delta_a}\to G$, $v_a\cl G\conv  \cor_{\Delta_a}\to F$ such that 
$F\conv  \cor_{\Delta_{2a}}\to G\conv  \cor_{\Delta_a}\to F$ and $G\conv  \cor_{\Delta_{2a}}\to F\conv  \cor_{\Delta_a}\to G$ are 
the natural morphisms  associated with $ \cor_{\Delta_{2a}}\to\cor_\Delta$.

One sets 
\eqn
&&\dist(F,G)=\inf\{a\in[0,+\infty];\text{$F$ and $G$ are $a$-isomorphic}\}.
\eneqn
\end{definition}
In the sequel, when considering thickenings of the diagonal $\Delta_a$, we shall assume $0\leq a<\alpha_X$, where $\alpha_X$ is given in~\eqref{hyp:dist1}. For $0\leq a\leq b<\alpha_X$, the morphism
\eqn\label
&&\cor_{\Delta_b}\to\cor_{\Delta_a}
\eneqn
define the morphisms
\eq
\RHom(F\conv \cor_{\Delta_a},G)&\to&\RHom(F\conv \cor_{\Delta_b},G),  \label{eq:mor1} \\
\RHom(F,G\conv \cor_{\Delta_b})&\to&\RHom(F,G\conv \cor_{\Delta_a}) \label{eq:mor2}.
\eneq
One of the central step of the proof is to establish that the morphisms \eqref{eq:mor1} and \eqref{eq:mor2} are isomorphisms. For that purpose, we will prove that these morphisms spaces can be written as the cohomology of a constructible sheaf on $\R$ supported by balls of radius $a$ and $b$. The constructibility will imply that, for $0 \leq a \leq b$ sufficiently small, these cohomology groups are isomorphic which implies the result.

Denote by $q_1\cl  X\times\R\to X$ and  $q_2\cl  X\times\R\to \R$  the projections.
Also denote by $I_a$ the closed interval $[-a,a]$ of $\R$ and by $I_a^\circ$ the open interval.
\begin{lemma}\label{le:2}
Let $F,G\in\Derb_\Rc(\cor_{\fX})$.  
\bnum
\item For $a\geq0$ one has, 
\eqn
\rsect_{I_a}(\R;\roim{q_2}\rhom(F\conv \cor_{\Delta^+},\epb{q_1}G))&\simeq &\RHom(F\conv \cor_{\Delta_a},G).
\eneqn

\item For $a>0$ one has,
\eqn
\rsect(I^\circ_a;\roim{q_2}\rhom(F\conv \cor_{\Delta^{+,\circ}},\epb{q_1}G))&\simeq &\RHom(F \conv \cor_{\Delta^\circ_a},G)[1].
\eneqn
\enum

\end{lemma}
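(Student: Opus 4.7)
The plan is to prove both parts by the same recipe: turn the LHS into a $\RHom$ on $X$ by adjunctions, unfold the convolution so that everything is computed on $X\times X\times\R$, then finish with one fiber integration along the $\R$-factor.

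For part (1), the first step is the standard pullback of support along a direct image,
\[
\rsect_{I_a}(\R;\roim{q_2}\rhom(F\conv\cor_{\Delta^+},\epb{q_1}G))\simeq\rsect_{X\times I_a}(X\times\R;\rhom(F\conv\cor_{\Delta^+},\epb{q_1}G)),
\]
followed by the identities $\rsect_Z\rhom(A,B)\simeq\rhom(A\ltens\cor_Z,B)$ and the $(\reim{q_1},\epb{q_1})$-adjunction, which together give
\[
\RHom_X\bigl(\reim{q_1}(\cor_{X\times I_a}\ltens(F\conv\cor_{\Delta^+})),G\bigr).
\]
For part (2) the same scheme applies with $\cor_{X\times I_a}$ replaced by $\cor_{X\times I_a^\circ}$ (extension by zero from an open set), using $\rsect(U;-)\simeq\RHom(\cor_U,-)$.

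Next, I would unfold $F\conv\cor_{\Delta^+}=\reim{q_{13}}(\opb{q_{12}}F\ltens\cor_{\Delta^+})$ and apply the projection formula (twice) to obtain
\[
\reim{q_1}\bigl(\cor_{X\times I_a}\ltens(F\conv\cor_{\Delta^+})\bigr)\simeq\reim{p_2}\bigl(\opb{p_1}F\ltens\reim{\mathrm{pr}_{12}}\cor_{\Delta^+\cap(X\times X\times I_a)}\bigr),
\]
where $p_1,p_2\colon X\times X\to X$ are the projections and $\mathrm{pr}_{12}\colon X\times X\times\R\to X\times X$ forgets the last coordinate. The problem is now reduced to computing $\reim{\mathrm{pr}_{12}}\cor_{\Delta^+\cap(X\times X\times I_a)}$ in part (1), and the analogous open-set pushforward in part (2).

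For (1), the fiber of $\Delta^+\cap(X\times X\times I_a)$ over $(x,y)\in X\times X$ is the closed interval $[d(x,y),a]$ when $d(x,y)\le a$, and empty otherwise. Since $R\Gamma_c$ of a nonempty compact interval is $\cor$ in degree $0$, this pushforward is $\cor_{\Delta_a}$, and substitution yields $F\conv\cor_{\Delta_a}$ inside $\RHom(-,G)$. For (2) the analogous fibers are open intervals $(d(x,y),a)$ on $\Delta_a^\circ$ and empty elsewhere; since $R\Gamma_c$ of a nonempty open interval of $\R$ is $\cor[-1]$, the pushforward becomes $\cor_{\Delta_a^\circ}[-1]$, hence $\reim{q_1}(\cdots)\simeq(F\conv\cor_{\Delta_a^\circ})[-1]$, which accounts for the shift $[1]$ on the right-hand side of (2).

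The main obstacle I expect is the last step: verifying that $\reim{\mathrm{pr}_{12}}\cor_{\Delta^+\cap(X\times X\times I_a)}\simeq\cor_{\Delta_a}$ (and its open analogue shifted by $[-1]$) as \emph{sheaves}, not just at the level of stalks. I would address this either by observing that on the open locus $\{d(x,y)<a\}$ the map $\mathrm{pr}_{12}$ restricted to $\Delta^{+,\circ}\cap(X\times X\times I_a^\circ)$ is a topologically trivial bundle with open-interval fibers (using the continuity of $d$ to build a fibered homeomorphism with $\Delta_a^\circ\times(0,1)$), or, more directly, by writing $\cor_{\Delta^+\cap(X\times X\times I_a)}$ as the cone of an explicit map of constant sheaves on closed strips for which base change along any $(x,y)$-inclusion computes the claimed answer compatibly. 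The hypotheses of $\R$-constructibility on $F$ and $G$ play no role in this lemma itself; they enter only at the next stage, where the two sides will be interpreted as sections of a single constructible sheaf on $\R$.
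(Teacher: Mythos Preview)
Your strategy is exactly the paper's: pull supports back through $\roim{q_2}$, absorb $\cor_{I_a}$ (resp.\ $\cor_{I_a^\circ}$) into the first slot of $\rhom$, adjoin along $(\reim{q_1},\epb{q_1})$, unfold the convolution and use the projection formula to reduce everything to the single computation of $\reim{\mathrm{pr}_{12}}\cor_{\Delta^+\cap(X\times X\times I_a)}$ (and its open analogue).

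On the obstacle you flag, the paper's resolution is cleaner than either of your two suggestions. For the closed case it observes that $\mathrm{pr}_{12}$ is proper on $\Delta^+\cap(X\times X\times I_a)$, so $\reim{\mathrm{pr}_{12}}\simeq\roim{\mathrm{pr}_{12}}$ there, and then the adjunction unit gives a genuine morphism
\[
\cor_{\Delta_a}\longrightarrow \roim{\mathrm{pr}_{12}}\,\opb{\mathrm{pr}_{12}}\cor_{\Delta_a}\longrightarrow \roim{\mathrm{pr}_{12}}\cor_{\Delta^+\cap(X\times X\times I_a)}\simeq \reim{\mathrm{pr}_{12}}\cor_{\Delta^+\cap(X\times X\times I_a)}.
\]
Having an actual morphism in hand, one can now check it is an isomorphism on stalks, which is exactly the contractible-interval computation you already did (the paper cites \cite{KS90}*{Prop.~2.5.2}). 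No topological trivialization or cone decomposition is needed. For part (ii) the paper does not repeat the fiberwise argument: it deduces $\reim{\mathrm{pr}_{12}}\cor_{\Delta^{+,\circ}\cap(X\times X\times I_a^\circ)}\simeq\cor_{\Delta_a^\circ}[-1]$ directly from the closed case by Verdier duality. Your direct $R\Gamma_c$-of-an-open-interval argument also works once you have a morphism to test, but the duality shortcut avoids having to produce one.

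Your closing remark that the constructibility hypotheses on $F$ and $G$ are not used here is correct; the paper invokes them only in the next lemma.
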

\begin{proof}
(i) Set $K\eqdot \rhom(F\conv \cor_{\Delta^+},\epb{q_1}G)$. Then
\eqn
\rsect_{I_a}\roim{q_2}K&\simeq&\roim{q_2}\rsect_{\opb{q_2}I_a}K\\
&\simeq&\roim{q_2}\rhom((F\conv \cor_{\Delta^+})\tens\cor_{\opb{q_2}I_a},\epb{q_1}G)).
\eneqn
Therefore,
\eqn
\rsect_{I_a}(\R;\roim{q_2}K)&\simeq &\roim{a_X}\roim{q_1}\rhom((F\conv \cor_{\Delta^+})\tens\cor_{\opb{q_2}I_a},\epb{q_1}G))\\
&\simeq &\roim{a_X}\rhom(\reim{q_1}\bl (F\conv \cor_{\Delta^+})\tens\cor_{\opb{q_2}I_a}\br,G).
\eneqn
To conclude, let us check the isomorphism
\eq\label{eq:iso1}
&&\reim{q_1}\bl (F\conv \cor_{\Delta^+})\tens\cor_{\opb{q_2}I_a}\br\simeq F\conv \cor_{\Delta_a}.
\eneq
Consider the diagram
\eqn
&&\xymatrix{
&X\times X\times\R\ar[ldd]_-{p_1}\ar[d]|-{p_{12}}\ar[drr]^-{p_{23}}&&&\\
&X\times X\ar[ld]^-{r_1}\ar[rd]_-{r_2}&&X\times\R\ar[ld]^-{q_1}\ar[rd]_-{q_2}&\\
X&&X&&\R
}\eneqn
One has
\eqn
\reim{q_1}\bl (F\conv \cor_{\Delta^+})\tens\cor_{\opb{q_2}I_a}\br&\simeq &
\reim{q_1}\reim{p_{23}}\bl \opb{p_1}F\tens \cor_{\Delta^+}\tens\opb{p_{23}}\cor_{\opb{q_2}I_a}\br\\
&\simeq&\reim{r_2}\reim{p_{12}}\bl\opb{p_{12}}\opb{r_1}F\tens \cor_{\Delta^+\cap\opb{p_{23}}\opb{q_2}I_a})\br\\
&\simeq&\reim{r_2}\bl \opb{r_1}F\tens \reim{p_{12}} \cor_{\Delta^+\cap\opb{p_{23}}\opb{q_2}I_a}\br.
\eneqn
We now remark that $\Delta^+\cap\opb{p_{23}}\opb{q_2}I_a \subset \opb{p_{12}}(\Delta_a)$ and $p_{12}$ restricted to $\Delta^+\cap\opb{p_{23}}\opb{q_2}I_a$ is proper. Hence, there  are natural morphisms (the first morphism is obtained by adjunction)
\eq\label{eq:iso2}
\cor_{\Delta_a} \to \roim{p_{12}} \opb{p_{12}}\cor_{\Delta_a} \to \roim{p_{12}}\cor_{\Delta^+\cap\opb{p_{23}}\opb{q_2}I_a} \isofrom \reim{p_{12}}\cor_{\Delta^+\cap\opb{p_{23}}\opb{q_2}I_a}
\eneq
which define
\eq\label{eq:iso2b}
&&\cor_{\Delta_a} \to \reim{p_{12}}\cor_{\Delta^+\cap\opb{p_{23}}\opb{q_2}I_a}.
\eneq
It remains to prove that this last morphism is an isomorphism. One has $p_{12}(\Delta^+\cap\opb{p_{23}}\opb{q_2}I_a)=\Delta_a$ and the fibers of $p_{12}$ above $(x,y)\in X\times X$ is the interval 
$[d(x,y),a]$  which is contractible or empty. This, together with \cite{KS90}*{Prop. 2.5.2}, proves that~\eqref{eq:iso2b} is an isomorphism, hence proves~\eqref{eq:iso1}.

\spa
(ii) Replacing $I_a$ with $I_a^\circ$ and $\Delta^+$ with $\Delta^{+,\circ}$ in the proof of (i) it remains to show
\eq\label{eq:iso1open}
&&\reim{q_1}\bl (F\conv \cor_{\Delta^{+,\circ}})\tens\cor_{\opb{q_2}I^\circ_a}\br\simeq F\conv \cor_{\Delta^\circ_a}[-1].
\eneq
By the same proof as for \eqref{eq:iso1} we have reduced to showing:
\eq\label{eq:iso4}
&& \reim{p_{12}} \cor_{\Delta^{+,\circ}\cap\opb{p_{23}}\opb{q_2}I^\circ_a}\simeq\cor_{\Delta^\circ_a}[-1].
\eneq
This isomorphism is deduced from~\eqref{eq:iso2b} by duality.
\end{proof}

\begin{lemma}\label{le:3}
Let $F,G\in\Derb_\Rc(\cor_{\fX})$ and let $0\leq a<\alpha_X$. Then there exists $c>a$ such that for $a\leq b\leq c$, \eqref{eq:mor1} and \eqref{eq:mor2} are isomorphisms. 
\end{lemma}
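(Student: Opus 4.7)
My plan is to deduce both statements from the $\R$-constructibility of an auxiliary sheaf on the line, and then close up using the local finiteness of subanalytic stratifications of $\R$. Set
$$H\eqdot\roim{q_2}\rhom(F\conv\cor_{\Delta^+},\epb{q_1}G)\in\Derb(\cor_\R).$$
By Lemma~\ref{le:2}(i) we have $\RHom(F\conv\cor_{\Delta_b},G)\simeq\rsect_{I_b}(\R;H)$ functorially in $b\geq0$; inspecting the chain of natural isomorphisms in the proof of Lemma~\ref{le:2}, the map \eqref{eq:mor1} corresponds under this identification to the canonical map $\rsect_{I_a}(\R;H)\to\rsect_{I_b}(\R;H)$ induced by $I_a\subset I_b$. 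So it is enough to show that this restriction map is an isomorphism for $b\in[a,c]$ with some $c>a$, and then to carry out a parallel construction for \eqref{eq:mor2}.

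The crucial point is the $\R$-constructibility of $H$. The set $\Delta^+\subset X\times X\times\R$ is closed and subanalytic (in a natural b-analytic compactification of $X\times X\times\R$), so $\cor_{\Delta^+}$ is b-constructible. Combined with the b-constructibility of $F,G$ and the stability of b-constructibility under the six Grothendieck operations on morphisms of b-analytic manifolds (\cite{Sc20}), each of the operations $\conv$, $\rhom$, $\epb{q_1}$, $\roim{q_2}$ in the definition of $H$ preserves this class, giving $H\in\Derb_\Rc(\cor_\R)$. This is the step where the b-analytic framework is indispensable, since $q_2$ is not proper; I expect it to be the main obstacle of the argument.

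Granted the constructibility, I would fix a locally finite subanalytic stratification of $\R$ adapted to $H$ and choose $c\in(a,\alpha_X)$ small enough that $(a,c]$ and $[-c,-a)$ each lie inside a single open stratum, so that $H$ is locally constant on open neighborhoods of these sets. For $a\leq b\leq c$ the distinguished triangle
$$\cor_{I_b\setminus I_a}\to\cor_{I_b}\to\cor_{I_a}\xrightarrow{+1}$$
in $\Derb(\cor_\R)$, with $I_b\setminus I_a=[-b,-a)\sqcup(a,b]$, gives after applying $\RHom(-,H)$ a triangle in which the cone of $\rsect_{I_a}(\R;H)\to\rsect_{I_b}(\R;H)$ is $\RHom(\cor_{I_b\setminus I_a},H)$. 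Splitting into the two connected components and using the triangle $\cor_{(b,\infty)}\to\cor_{(a,\infty)}\to\cor_{(a,b]}$, the vanishing $\RHom(\cor_{(a,b]},H)\simeq0$ reduces to the assertion that the restriction $\rsect((a,\infty);H)\to\rsect((b,\infty);H)$ is an isomorphism, which follows from the local constancy of $H$ on a neighborhood of $(a,b]$ by a direct computation with local cohomology on a contractible interval. The symmetric argument disposes of $[-b,-a)$, yielding \eqref{eq:mor1}.

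For \eqref{eq:mor2} I would run a parallel construction. One option is to prove a companion to Lemma~\ref{le:2}(i) realizing $\RHom(F,G\conv\cor_{\Delta_b})$ as the $I_b$-supported sections of a second $\R$-constructible complex $H'$ on $\R$ built from $G\conv\cor_{\Delta^+}$; another is to use the invertibility of $\cor_{\Delta_b}$ granted by hypothesis \eqref{hyp:dist2} to reduce \eqref{eq:mor2} to a statement of the same form as \eqref{eq:mor1}. In either case, the only nontrivial ingredient remains the $\R$-constructibility of the auxiliary complex; the rest is the same one-dimensional stratification argument.
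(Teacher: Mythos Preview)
Your proposal is correct and follows the same strategy as the paper: pass to the auxiliary complex $H=\roim{q_2}\rhom(F\conv\cor_{\Delta^+},\epb{q_1}G)$ on $\R$, use the b-analytic six-functor formalism of \cite{Sc20} to obtain $H\in\Derb_\Rc(\cor_\R)$, and deduce that $\rsect_{I_a}(\R;H)\to\rsect_{I_b}(\R;H)$ is an isomorphism for $b$ close to $a$. The paper simply cites \cite{KS90}*{Lem.~8.4.7} for this last step, whereas you unpack it by hand via a stratification and Mayer--Vietoris; both are fine.

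For \eqref{eq:mor2} the paper implements exactly your second option: the invertibility result \cite{PS20}*{Prop.~2.2.3} gives $\RHom(F,G\conv\cor_{\Delta_b})\simeq\RHom((F\tens\omega_X)\conv\cor_{\Delta_b^\circ},G)$, but note that the inverse kernel involves the \emph{open} thickening $\Delta_b^\circ$, not $\Delta_b$. This is why the paper invokes Lemma~\ref{le:2}(ii) (sections over $I_b^\circ$ of a complex built from $\Delta^{+,\circ}$) rather than part~(i); your phrase ``a statement of the same form as \eqref{eq:mor1}'' is slightly imprecise on this point, though the underlying constructibility argument is unchanged.
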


\begin{proof}
(i) Let us treat \eqref{eq:mor1}.
 Set $H\eqdot \roim{q_2}\rhom(F\conv \cor_{\Delta^+},\epb{q_1}G)$.  Since $F, G \in \Derb_\Rc(\cor_{\fX})$, then $H\in\Derb_\Rc(\cor_\R)$ by \cite{Sc20}*{\S 2.} and therefore by \cite{KS90}*{Lem.~8.4.7}, 
\eqn
\rsect_{I_a}(\R;H)\isoto \rsect_{I_b}(\R;H)
\eneqn
for $a\leq b\leq c$ for some $c>a$. Applying Lemma~\ref{le:2} (i), we get the result.

\spa
(ii) Let us treat \eqref{eq:mor2}. First, using \cite{PS20}*{Prop.~2.2.3}, we obtain                
\eqn
\RHom(F, G\conv \cor_{\Delta_a}) &\simeq& \RHom(F \conv (\cor_{\Delta_a^\circ} \tens \opb{r_2}\omega_X),G) \\
& \simeq& \RHom((F \tens \, \omega_X) \conv \cor_{\Delta_a^\circ}, G).
\eneqn

Second, set $H \eqdot \roim{q_2}\rhom((F \tens \, \omega_X)\conv \cor_{\Delta^{+,\circ}},\epb{q_1}G))$. Since $F, G \in \Derb_\Rc(\cor_{\fX})$, then $H\in\Derb_\Rc(\cor_\R)$ and therefore by \cite{KS90}*{Lem.~8.4.7}
\eqn
\rsect(I^\circ_a;H)\isoto \rsect(I^\circ_b;H)
\eneqn
for $a\leq b\leq c$ for some $c>a$. Applying Lemma~\ref{le:2} (ii), we get the result.

\end{proof}

\begin{theorem}\label{th:main}
Let $\fX$ be a b-analytic manifold endowed with a good distance and satisfying~\eqref{hyp:dist2}. Let $F,G\in\Derb_\Rc(\cor_{\fX})$. If  $\dist(F,G)\leq a$  with $0\leq a<\alpha_X$, then $F$ and $G$ are $a$-isomorphic. In particular, if $\dist(F,G)=0$,  then $F$ and $G$ are isomorphic. 
\end{theorem}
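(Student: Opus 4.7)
The plan is to use Lemma \ref{le:3} to rigidify the $a$-isomorphism condition, so that the infimum defining $\dist(F,G)$ is effectively attained. Unpacking the hypothesis $\dist(F,G)\leq a$: for every $\varepsilon>0$ there exists $b$ with $F, G$ being $b$-isomorphic and $b<a+\varepsilon$. First, I would invoke Lemma \ref{le:3} simultaneously at radii $a$ and $2a$ (and symmetrically with $F,G$ interchanged, and with second variable set to $F$ or $G$) to produce a common $c\in(a,\alpha_X)$ such that, for $a\leq b\leq c$, the restriction maps \eqref{eq:mor1}, \eqref{eq:mor2} and their analogues at radii $2a, 2b$ (applied with $G$ replaced by $F$, respectively $G$) are all isomorphisms. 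Then choose $b\in[a,c)$ for which $F, G$ are $b$-isomorphic. (If the $b$ produced happens to satisfy $b<a$, precomposing the $b$-morphisms with $\cor_{\Delta_a}\twoheadrightarrow\cor_{\Delta_b}$ already yields an $a$-isomorphism, and one is done.)

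Let $u_b, v_b$ be the $b$-isomorphism data, and define $u_a\cl F\conv\cor_{\Delta_a}\to G$ and $v_a\cl G\conv\cor_{\Delta_a}\to F$ as the unique preimages of $u_b, v_b$ under the isomorphisms \eqref{eq:mor1} and \eqref{eq:mor2}. The remaining task is to check the compatibility axiom of Definition \ref{def:1} for $(u_a, v_a)$. I would view the composition $F\conv\cor_{\Delta_{2a}}\to G\conv\cor_{\Delta_a}\to F$ built from $u_a, v_a$ as an element of $\RHom(F\conv\cor_{\Delta_{2a}},F)$. By bifunctoriality of $\conv$ and the defining relations $u_b=u_a\circ(\id_F\conv f)$, $v_b=v_a\circ(\id_G\conv f)$ (with $f\cl\cor_{\Delta_b}\twoheadrightarrow\cor_{\Delta_a}$ the restriction), this element restricts under $\RHom(F\conv\cor_{\Delta_{2a}},F)\to\RHom(F\conv\cor_{\Delta_{2b}},F)$ to the analogous composition at level $2b$, which equals the natural map $F\conv\cor_{\Delta_{2b}}\to F$ by the $b$-isomorphism compatibility. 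Since the natural map $F\conv\cor_{\Delta_{2a}}\to F$ also restricts to this same element, and since the restriction map on $\RHom$'s is an isomorphism by Lemma \ref{le:3} applied at radius $2a$ to the pair $(F,F)$, the two elements of $\RHom(F\conv\cor_{\Delta_{2a}},F)$ agree. The symmetric argument with $F, G$ interchanged yields the other compatibility.

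The main obstacle is the interplay between radii $a$ and $2a$: Lemma \ref{le:3} must be applicable at $2a$, which naively requires $2a<\alpha_X$ together with the identification $\cor_{\Delta_{2b}}\simeq\cor_{\Delta_b}\conv\cor_{\Delta_b}$ from \eqref{hyp:dist1}(iii); the range $\alpha_X/2\leq a<\alpha_X$ may demand separate treatment, for instance by iterating the argument to propagate the result stepwise in $a$. Granted the first assertion of the theorem, the \emph{in particular} claim is immediate: taking $a=0$, the kernel $\cor_{\Delta_0}=\cor_\Delta$ is the unit of $\conv$, so $u_0, v_0$ become ordinary morphisms $F\to G$ and $G\to F$, and the compatibility axiom reduces to $v_0\circ u_0=\id_F$ and $u_0\circ v_0=\id_G$, whence $F\simeq G$.
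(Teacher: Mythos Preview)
Your proposal is correct and follows essentially the same strategy as the paper: use Lemma~\ref{le:3} to show the relevant restriction maps on $\Hom$-spaces are isomorphisms, pull $(u_b,v_b)$ back to $(u_a,v_a)$, and verify the interleaving relation at level $a$ by injectivity of the restriction to level $b$. The paper packages the verification step into the four-row commutative diagram~\eqref{diag:1}, which interpolates through the radii $a+b$ and $b-a$ and uses the invertibility of $\cor_{\Delta_{b-a}}$ (hypothesis~\eqref{hyp:dist2}, via~\cite{PS20}*{Prop.~2.2.3}) for the first vertical square; you instead invoke Lemma~\ref{le:3} directly at radii $a$ and $2a$. These are equivalent organizations of the same argument.

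Two minor remarks. First, both $u_a$ and $v_a$ are obtained via the isomorphism of type~\eqref{eq:mor1} (with the roles of $F$ and $G$ exchanged for $v_a$); type~\eqref{eq:mor2} is not needed in your route. Second, your concern about the range $2a\geq\alpha_X$ is reasonable given the stated hypothesis of Lemma~\ref{le:3}, but observe that the \emph{proof} of part~(i) of that lemma nowhere uses the bound $a<\alpha_X$: the sheaf $H$ is $\R$-constructible on all of $\R$, and Lemma~\ref{le:2}(i) holds for every $a\geq0$. Hence the isomorphism $\RHom(F\conv\cor_{\Delta_{2a}},F)\isoto\RHom(F\conv\cor_{\Delta_{2b}},F)$ is available without restriction. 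The paper's diagram likewise involves radii up to $2b$ and relies on the same point implicitly.
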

\begin{proof}
The proof proceeds in four steps.  Let $0\leq a\leq b$ with $b$ small enough so that Lemma \ref{le:3} holds.

\spa
(i) Consider Diagram~\eqref{diag:1} below

\eq\label{diag:1}
&&\ba{l}\xymatrix{ \ar @{} [dr] |{\raisebox{.5pt}{\textcircled{\raisebox{-.9pt} {1}}}}
\Hom(F\conv \cor_{\Delta_{2a}},G\conv \cor_{\Delta_a})\times\Hom(G\conv \cor_{\Delta_a},F)\ar[r]^-\conv\ar[d]&
\Hom(F\conv \cor_{\Delta_{2a}},F)\ar[d]\\
\ar @{} [dr] |{\raisebox{.5pt}{\textcircled{\raisebox{-.9pt} {2}}}}
\Hom(F\conv \cor_{\Delta_{a+b}},G\conv \cor_{\Delta_b})\times\Hom(G\conv \cor_{\Delta_{b}},F\conv \cor_{\Delta_{b-a}})\ar[d] \ar[r]^-\conv&\Hom(F\conv \cor_{\Delta_{a+b}},F\conv \cor_{\Delta_{b-a}}) \ar[d]\\
\ar @{} [dr] |{\raisebox{.5pt}{\textcircled{\raisebox{-.9pt} {3}}}}
\Hom(F\conv \cor_{\Delta_{a+b}},G\conv \cor_{\Delta_b})\times\Hom(G\conv \cor_{\Delta_b},F)\ar[r]^-\conv\ar[d]&
\Hom(F\conv \cor_{\Delta_{a+b}},F)\ar[d]\\
\Hom(F\conv \cor_{\Delta_{2b}},G\conv \cor_{\Delta_b})\times\Hom(G\conv \cor_{\Delta_b},F)\ar[r]^-\conv&
\Hom(F\conv \cor_{\Delta_{2b}},F)
}\ea\eneq
Let us show that this diagram commutes.

\spa

 Diagram \circlednb{1} is obtained by applying the functor $\cdot \conv \cor_{\Delta_{b-a}}$ to the first line.

Diagram \circlednb{2} is obtained by composing the second line with the canonical morphism $F \conv \cor_{\Delta_{b-a}} \to F$.

Diagram \circlednb{3} is obtained by composing the third line with the canonical morphism $F \conv \cor_{\Delta_{2b}} \to F \conv \cor_{\Delta_{a+b}}$.

Hence, Diagram ~\eqref{diag:1} commutes.

\spa 
(ii) Assume $\dist(F,G)\leq a$, let $b>a$  and let $u_b\in\Hom(F\conv\cor_{\Delta_b},G)$ and $v_b\in\Hom(G\conv\cor_{\Delta_b},F)$ be as in Definition~\ref{def:1}. Denote by $\epsilon_{b}$ the natural morphism associated with $\cor_{\Delta_b}\to\cor_{\Delta}$, by $\Psi_b \cl \Derb(\cor_X) \to \Derb(\cor_X)$ the functor $L \mapsto L \conv \cor_{\Delta_b}$ and by $\Psi_F \cl \Derb(\cor_{X \times X}) \to \Derb(\cor_{X})$, $K \mapsto F \conv K$.  Then, the $b$-isomorphism equations are explicitely given by
\eq\label{eq:uconvv}
&&v_b\conv  \Psi_b(u_b)=  \Psi_F(\epsilon_{2b}).
\eneq

\spa
(iii) The vertical arrows in Diagram \circlednb{1} are isomorphisms thanks to~\eqref{hyp:dist2}  (see~\cite{PS20}*{Prop.~2.2.3}). The vertical arrows in Diagram \circlednb{2} and \circlednb{3} are isomorphisms thanks to Lemma~\ref{le:3}.

\spa
(iv)  Hence, there exist $u_a\in\Hom(F\conv\cor_{\Delta_a},G)$ and $v_a\in\Hom(G\conv\cor_{\Delta_a},F)$ whose images are the morphisms $u_b$ and $v_b$. Using the vertical isomorphism, equation \eqref{eq:uconvv} translates to the same equation with $b$ replaced by $a$. The same result holds with $F$ and $G$ interchanged. This complete the proof.
\end{proof}

\begin{corollary}
Let $X$ be a real analytic manifold  endowed with a good distance and satisfying~\eqref{hyp:dist2}. Let $F,G\in\Derb_\Rc(\cor_X)$, both with compact support. 
 Assume that $\dist(F,G)=0$. Then $F\simeq G$. 
\end{corollary}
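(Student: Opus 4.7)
My plan is to deduce the corollary from Theorem~\ref{th:main} by producing a b-analytic manifold structure on which $F$ and $G$ become b-constructible, and then applying the theorem.

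First, let $K := \mathrm{supp}(F) \cup \mathrm{supp}(G)$, which is compact in $X$. Using that $X$ is a real analytic manifold, I would choose a subanalytic open relatively compact subset $U \subset X$ containing $K$, and form the b-analytic manifold $\fX := (U, X)$, equipped with the restriction of the distance from $X$. The restrictions $F|_U$ and $G|_U$ lie in $\Derb_\Rc(\cor_{\fX})$: they are $\R$-constructible on $U$ as restrictions of $\R$-constructible sheaves, and since $\mathrm{supp}(F) \subset U$ is closed in $X$, we have $\eim{j_U}(F|_U) \simeq F$, which is $\R$-constructible on $X$ by hypothesis (and similarly for $G$).

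Next, the hypothesis $\dist(F, G) = 0$ on $X$ transfers to $\dist(F|_U, G|_U) = 0$ on $\fX$. Indeed, for $a > 0$ smaller than both $d(K, \partial U)$ and $\alpha_X$, the convolutions $F \conv \cor_{\Delta_a}$ computed on $X$ are supported in the $a$-thickening of $K$, which is contained in $U$; hence they agree with the analogous convolutions in $\fX$, and an $a$-isomorphism pair on $X$ restricts to one on $\fX$. By Theorem~\ref{th:main} applied to $\fX$ we obtain $F|_U \simeq G|_U$, and extending by zero via $\eim{j_U}$ yields $F \simeq G$ in $\Derb_\Rc(\cor_X)$.

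The main obstacle I expect is verifying that the restricted distance on $U$ remains good and satisfies~\eqref{hyp:dist2}, since restricting a distance to an open subset can destroy the contractibility of intersections of balls and, more critically, the composition identity $\Delta_a \conv \Delta_b = \Delta_{a+b}$. One way to deal with this is to choose $U$ as a finite union of small geodesically convex balls, with radii well below the $\alpha_X$ threshold and any convexity radius of the ambient distance. Alternatively, one could bypass a fully good restricted distance by adapting the proof of Lemma~\ref{le:3} directly: since $\mathrm{supp}(G)$ is compact, the sheaf $\rhom(F \conv \cor_{\Delta^+},\epb{q_1}G)$ is supported in $\mathrm{supp}(G) \times \R$, on which $q_2$ is proper, so $H = \roim{q_2}\rhom(F \conv \cor_{\Delta^+},\epb{q_1}G)$ lies in $\Derb_\Rc(\cor_\R)$ essentially automatically, and the argument of Theorem~\ref{th:main} goes through with only minor modifications.
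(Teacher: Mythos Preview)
Your approach is exactly the paper's: choose a relatively compact subanalytic open $U\subset X$ containing both supports, regard $(U,X)$ as a b-analytic manifold, and invoke Theorem~\ref{th:main}. The paper's proof is a single sentence doing precisely this and says nothing further.

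The concern you raise is genuine, and the paper does not address it either. Restricting the distance $d$ to $U$ will in general \emph{not} yield a good metric space: properness of the projections on $\Delta_a\cap(U\times U)$ fails (the fibre $B_a(x)\cap U$ need not be compact in $U$), the composition rule $\Delta_a\conv\Delta_b=\Delta_{a+b}$ can fail when the required intermediate points lie outside $U$, and the contractibility and l.t.c.\ conditions in~\eqref{hyp:dist1}--\eqref{hyp:dist2} need not survive intersection with $U$. Your first workaround (taking $U$ to be a finite union of small convex balls) does not obviously repair this, since unions of convex sets are not convex.

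Your second workaround is the correct resolution, and in fact shows the b-analytic detour is unnecessary. One stays on $X$ with its given good distance: since $G$ has compact support, the sheaf $\rhom(F\conv\cor_{\Delta^+},\epb{q_1}G)$ is supported in $\supp(G)\times\R$, over which $q_2$ is proper; together with constructibility of $F$, $G$ and subanalyticity of $\Delta^+$, this gives $H\in\Derb_\Rc(\cor_\R)$ directly. Lemma~\ref{le:3} and hence the proof of Theorem~\ref{th:main} then go through verbatim for compactly supported $F,G\in\Derb_\Rc(\cor_X)$, with all kernels and conditions~\eqref{hyp:dist1}--\eqref{hyp:dist2} still living on $X$. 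This is cleaner than the argument the paper records.
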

\begin{proof}
Let $Y$ be an open subanalytic subset of $X$ containing the supports of $F$ and $G$. Then regard $\fY=(Y,X)$ as a b-analytic manifold and apply Theorem~\ref{th:main}. 
\end{proof}

\begin{remark}
When the space $X$ is a finite dimensional real vector space endowed with a closed proper convex subanalytic cone $\gamma$ with nonempty interior and a vector $v$ in the interior of $\gamma$, then, thanks to \cite{BP21}*{Cor.~5.9}, Theorem \ref{th:main} implies the same results for $\gamma$-sheaves endowed with the interleaving distance associated with the pair $(\gamma,v)$ (see \cite{BP21}*{Def.~4.8}) .
\end{remark}

\providecommand{\bysame}{\stLeavevmode\hbox to3em{\hrulefill}\thinspace}

\vspace*{1cm}
\noindent
\begin{tabular}{cc}
\parbox[t]{14em}
{\scriptsize{
Francois Petit \\
Université de Paris\\ 
CRESS, INSERM, INRA\\ 
F-75004 Paris France\\
e-mail address: francois.petit@u-paris.fr}}
& 
\parbox[t]{14em}
{\scriptsize{
		Pierre Schapira\\
		Sorbonne Universit{\'e}, CNRS IMJ-PRG\\
		4 place Jussieu, 75252 Paris Cedex 05 France\\
		e-mail: pierre.schapira@imj-prg.fr\\
		http://webusers.imj-prg.fr/\textasciitilde pierre.schapira/
}}
\\
&\\
\parbox[t]{14em}
{\scriptsize{
		Lukas Waas\\
		Ruprecht-Karls-Universität Heidelberg\\
		Mathematisches Institut \\
		Im Neuenheimer Feld 205, 69120 Heidelberg, Deutschland\\
		e-mail:lwaas@mathi.uni-heidelberg.de\\
}}&
\end{tabular}
\end{document}